\newtheorem{theorem}{Theorem}[section]
\newtheorem{conjecture}{Conjecture}[section]
\newtheorem{corollary}[theorem]{Corollary}
\newtheorem{lemma}[theorem]{Lemma}
\theoremstyle{definition}
\theoremstyle{remark}
\DeclareMathOperator{\Res}{Res}
\DeclareMathOperator{\Gal}{Gal}
\DeclareMathOperator{\Disc}{Disc}
\DeclareMathOperator{\Cl}{Cl}
\DeclareMathOperator{\orb}{orb}
\newcommand{\Q}{\mathbb{Q}}
\renewcommand{\d}{\mathop{d\!}}
\title{On the average size of 3-torsion in class groups of $C_2 \wr H$-extensions}
\author{Jonas Iskander and Hari R. Iyer}
\date{\today}
\begin{document}

\address{Department of Mathematics, Harvard University, Cambridge, MA 02138}
\email[Jonas Iskander]{jonasiskander@college.harvard.edu}

\address{Department of Mathematics, Princeton University, Princeton, NJ 08540}
\email[Hari Iyer]{hi9184@princeton.edu}

\begin{abstract}
    The Cohen--Lenstra--Martinet heuristics lead one to conjecture that the average size of the $p$-torsion in class groups of $G$-extensions of a number field is finite. In a 2021 paper, Lemke Oliver, Wang, and Wood proved this conjecture in the case of $p = 3$ for permutation groups $G$ of the form $C_2 \wr H$ for a broad family of permutation groups $H$, including most nilpotent groups. However, their theorem does not apply for some nilpotent groups of interest, such as $H = C_5$. We extend their results to prove that the average size of $3$-torsion in class groups of $C_2 \wr H$-extensions is finite for any nilpotent group $H$.
\end{abstract}

\maketitle

\section{Introduction}

For an extension of number fields $K/k$ with degree $n$ and Galois closure $\widetilde{K}$, let $\Gal(K/k)$ denote the Galois group of $\widetilde{K}/k$ acting as a transitive permutation group on the $n$-element set of $k$-linear embeddings $K \hookrightarrow \widetilde{K}$. Given a transitive permutation group $G \subseteq S_n$, a \textit{$G$-extension} of $k$ is a degree $n$ extension $K/k$ in $\bar{\Q}$ equipped with an isomorphism $\Gal(K/k) \xrightarrow{\sim} G$ as permutation groups; given a real number $X > 1$, we denote the set of all $G$-extensions $K$ with $\lvert\Disc(K)\rvert \le X$ by $E_k(G, X)$. Heuristics of Cohen, Lenstra, and Martinet \cite{cohen-lenstra, cohen-martinet} suggest that the sizes $h_p(K)$ of the $p$-torsion subgroups of the class groups $\Cl(K)$ satisfy the following conjecture.

\begin{conjecture} \label{main-conj}
    Let $k$ be a number field, $G$ a transitive permutation group, and $p$ a prime such that $p \nmid \lvert G \rvert$. Then there exists a constant $c_{k, G, p} > 0$ such that
    \begin{equation}
        \lim_{X \to \infty} \frac{1}{|E_k(G, X)|}\sum_{K \in E_k(G, X)}h_p(K) = c_{k,G,p}. \label{main-conj-eq}
    \end{equation}
\end{conjecture}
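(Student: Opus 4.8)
The plan is to translate the left-hand side of \eqref{main-conj-eq} into a problem of counting number fields by discriminant, and then to feed in precise asymptotic counts. First, by class field theory the quantity $h_p(K) = \lvert \Cl(K)[p]\rvert = p^{r_p(K)}$, where $r_p(K)$ is the $p$-rank of $\Cl(K)$, equals $1$ plus $(p-1)$ times the number of index-$p$ subgroups of $\Cl(K)$, and each such subgroup corresponds via the Hilbert class field to an unramified degree-$p$ extension $L/K$ with $\Gal(L/K) \cong C_p$. Thus
\begin{equation*}
    \sum_{K \in E_k(G,X)} h_p(K) = \lvert E_k(G,X)\rvert + (p-1)\sum_{K \in E_k(G,X)} \#\{L/K : L/K \text{ unramified}, \Gal(L/K) \cong C_p\}.
\end{equation*}
The first summand already shows that, if the limit exists, it is at least $1$, so the positivity $c_{k,G,p} > 0$ is automatic; the entire content lies in controlling the second term and in showing that both numerator and denominator admit genuine asymptotics rather than merely upper bounds.

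Second I would reorganize the inner count field-theoretically. A pair $(K, L)$ as above is encoded by an extension $\widetilde{L}/k$ whose Galois group is a subgroup $\Gamma$ of the wreath product $C_p \wr G$ surjecting onto $G$, with kernel an $\mathbb{F}_p[G]$-submodule $A$ of the permutation module $\mathbb{F}_p^n$; the unramifiedness of $L/K$ and the requirement that $\Gal(K/k) \cong G$ hold genuinely (rather than through a proper subgroup) cut out which $\Gamma$ and which local conditions are admissible. Because $p \nmid \lvert G\rvert$, Maschke's theorem makes $\mathbb{F}_p[G]$ semisimple, so the admissible modules $A$ form a finite, completely reducible family and the extensions $L/K$ are tame at $p$; this is exactly the hypothesis that lets one enumerate the relevant groups $\Gamma$ and write the sum as a finite, inclusion–exclusion combination of counts $\lvert E_k(\Gamma, X)\rvert$ subject to the unramified and surjectivity conditions.

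Third I would invoke asymptotic counts of the form $\lvert E_k(\Gamma, X)\rvert \sim c_\Gamma\, X^{a(\Gamma)} (\log X)^{b(\Gamma)}$ for each group $\Gamma$ appearing, including $\Gamma = G$ in the denominator, where $a(\Gamma)$ is governed by the minimal-index invariant and $b(\Gamma)$ by the refined Malle–Türkelli prediction. Given such counts, I would divide numerator by denominator: the diagonal terms match $\lvert E_k(G,X)\rvert$ in both exponents, the off-diagonal twisted families contribute strictly lower order because the unramified condition forces a discriminant saving, and the surviving ratio converges to a finite constant obtained by summing leading coefficients over the admissible twists. Producing the corrected power of $\log X$ is exactly what would upgrade a $\limsup$ bound into existence of the limit and pin down the value $c_{k,G,p}$.

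The hard part, and the reason Conjecture~\ref{main-conj} remains open in this generality, is precisely the input of the third step: Malle's conjecture with correct leading asymptotics, indeed even the correct order of magnitude, is unknown for arbitrary transitive $\Gamma$, and the naive power of $\log X$ can be wrong, so one cannot simply cite an off-the-shelf count. Establishing the limit therefore requires proving a strong form of Malle's conjecture for the auxiliary groups $C_p \wr G$ and their admissible subgroups, with uniformity in the local conditions and control of error terms strong enough to confine the twisted contributions to lower order. This is why the available unconditional results — Davenport–Heilbronn for $G = C_2$, Bhargava and successors for small symmetric and a handful of wreath-type groups, and the finiteness theorem of Lemke Oliver–Wang–Wood extended in the present paper — are confined to groups for which such counts exist; the general case reduces to, and is essentially as hard as, the open asymptotic enumeration of number fields by discriminant.
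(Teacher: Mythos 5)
You were asked to prove Conjecture~\ref{main-conj}, but this statement is a \emph{conjecture} in the paper: it has no proof there (the paper proves it only for $p=3$ and $G = C_2 \wr H$ under a counting hypothesis on $H$), and your proposal does not prove it either --- your own final paragraph concedes that the decisive third step requires a strong form of Malle's conjecture, with leading constants, correct $\log$-powers, and uniform local conditions, for the auxiliary groups inside $C_p \wr G$. That is a genuine gap, indeed \emph{the} gap: the moment you invoke $\lvert E_k(\Gamma, X)\rvert \sim c_\Gamma X^{a(\Gamma)}(\log X)^{b(\Gamma)}$ you are assuming something strictly stronger than anything known (the refined $\log$-power prediction is known to need corrections \`a la T\"urkelli, and even the right order of magnitude is open for general $\Gamma$), and you additionally need such counts with imposed unramifiedness/splitting conditions, which is a further strengthening you only gesture at. The preliminary reductions are sound: $h_p(K) = 1 + (p-1)\cdot\#\{\text{index-}p\text{ subgroups of } \Cl(K)\}$, the class-field-theory dictionary to unramified $C_p$-extensions $L/K$, and the packaging of pairs $(K,L)$ via subgroups of $C_p \wr G$ with kernel an $\mathbb{F}_p[G]$-submodule of the permutation module (semisimple by Maschke since $p \nmid \lvert G \rvert$) are all standard and correct, as is the observation that positivity of $c_{k,G,p}$ is automatic once the limit exists. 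But the conclusion \eqref{main-conj-eq} is never reached unconditionally; what you have written is a roadmap conditional on open counting conjectures, and you should present it as such.

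It is also worth contrasting your ``top-down'' strategy with how the known cases are actually proved, since the difference is instructive. In Theorem~\ref{main-thm} (following Lemke Oliver--Wang--Wood), one does \emph{not} count the unramified cyclic overfields $L$ by proving Malle-type asymptotics for subgroups of $C_3 \wr G$. Instead one exploits the canonical index-$2$ subfield $F_K$ of each $C_2 \wr H$-extension $K$, the bound $h_3(K) \le h_3(K/F_K)\,h_3(F_K)$, and an honest Davenport--Heilbronn-type average of the relative $3$-torsion $h_3(K/F)$ over quadratic extensions of each \emph{fixed} $F$ --- a family where asymptotics genuinely exist --- and then controls the tail over large $\Disc(F_K)$ by Abel summation using the hypothesis $\lvert E_k(H,T)\rvert \ll_{k,H} T^{\frac{1}{2}-\delta}$. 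That is, the unconditional results trade the unknown enumeration of wreath-product extensions for a known average in relative quadratic families; your diagnosis that the general case of Conjecture~\ref{main-conj} is essentially as hard as the asymptotic enumeration of number fields by discriminant is accurate, but it confirms that your proposal is not a proof.
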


\noindent
As explained in \cite[p.\@ 588]{pierce-turnage-butterbaugh-wood}, the result \cite[Theorem 6.1]{wang-wood} shows that the above is a consequence of the Cohen-Lenstra-Martinet heuristics. Davenport and Heilbronn \cite{davenport-heilbronn} proved Conjecture~\ref{main-conj} for the case where $p = 3$, $G = S_2$, and $k = \Q$ in 1971, and Datskovsky and Wright \cite{datskovsky-wright} generalized their results to arbitrary base fields $k$ in 1988. In 2005, Bhargava \cite{bhargava} also proved \eqref{main-conj-eq} in the case where $p = 2$, $G = S_3$, and $k = \Q$.

Recent work of Lemke Oliver, Wang, and Wood \cite{oliver-wang-wood} further generalizes \cite{datskovsky-wright} by proving Conjecture~\ref{main-conj} for a much broader class of groups $G$. Specifically, they prove the conjecture for $p = 3$ and arbitrary $k$ when $G$ is a $2$-group containing a transposition, as well as when $G = C_2 \wr H$ for $H$ a transitive permutation group satisfying $\lvert E_k(H, T)\rvert \ll_{k,H,\epsilon} T^{\frac{1}{6}+\epsilon}$ for every $\epsilon > 0$ and $T \ge 1$.\footnote{The result of Lemke Oliver et.\@ al.\@ also holds under the weaker condition that $\sum_{F \in E_k(H, T)} h_3(F) \ll_{k,H,\epsilon} X^{\frac{2}{3}+\epsilon}$ for all $\epsilon > 0$ and $T \ge 1$.} In this paper, we further generalize their work by providing an optimization of an argument from \cite{oliver-wang-wood} which yields the following result.

\begin{theorem} \label{main-thm}
    Let $H \subseteq S_n$ be a transitive permutation group, and set $G := C_2 \wr H$. Suppose that $E_k(H, \infty)$ is nonempty and that there exists a $\delta > 0$ such that \begin{equation*}
        \lvert E_k(H, T) \rvert \ll_{k,H} T^{\frac{1}{2}-\delta}
    \end{equation*} for all $T \ge 1$. Then there is a constant $c_{k,G,3}$ such that \begin{equation*}
        \lim_{X \to \infty}\frac{1}{|E_k(G, X)|}\sum_{K \in E_k(G, X)}h_3(K) = c_{k, G, 3}.
    \end{equation*} Explicitly, we have
    \begin{align}
        c_{k, G, 3} = \left(\sum_{F \in E_k(H,\infty)}\frac{h_3(F)\Res_{s=1}\zeta_F(s)}{\zeta_F(2)\Disc(F)^2}\cdot\left(1 + \frac{2^{r_1(F)}}{3^{r_1(F) + r_2(F)}}\right)\right)\left(\sum_{F \in E_k(H,\infty)}\frac{\Res_{s = 1}\zeta_F(s)}{\zeta_F(2)\Disc(F)^2}\right)^{-1}, \label{c-def-eq}
    \end{align} where $\zeta_F$ denotes the Dedekind zeta function for $F$ and $r_1(F)$ and $r_2(F)$ denote the numbers of real and complex embeddings of $F$, respectively.
\end{theorem}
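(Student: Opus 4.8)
The plan is to exploit the imprimitive structure of $C_2 \wr H$-extensions to reduce the problem to a relative Davenport--Heilbronn count over a varying base field, following the strategy of \cite{oliver-wang-wood} but with a sharper accounting of the error terms. First I would record the parametrization. A $G$-extension $K/k$ of degree $2n$ is imprimitive with $n$ blocks of size two, so it contains a distinguished subfield $F$ with $[K:F] = 2$ and $F/k$ an $H$-extension; conversely, every $F \in E_k(H,\infty)$ together with a quadratic extension $K/F$ in ``generic position'' (meaning the conjugates of $K$ over $k$ generate the full elementary abelian $C_2^n$, so that $\Gal(K/k) = C_2 \wr H$ rather than a proper subgroup) determines a $G$-extension. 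The conductor--discriminant formula gives
\[
    |\Disc(K)| = \Disc(F)^2 \, N_{F/\Q}(\fd_{K/F}),
\]
so $|\Disc(K)| \le X$ forces $\Disc(F) \le X^{1/2}$ and, for fixed $F$, $N_{F/\Q}(\fd_{K/F}) \le Y_F := X/\Disc(F)^2$. The non-generic pairs contribute a negligible amount and are discarded. Interchanging the order of summation,
\[
    \sum_{K \in E_k(G,X)} h_3(K) = \sum_{F \in E_k(H, X^{1/2})} \; \sideset{}{'}\sum_{\substack{K/F \text{ quadratic} \\ N_{F/\Q}(\fd_{K/F}) \le Y_F}} h_3(K),
\]
where the prime indicates the genericity restriction, and similarly for $|E_k(G,X)|$.

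The second ingredient is an algebraic reduction isolating the role of $F$. As $\Gal(K/F) = C_2$ acts on $\Cl(K)[3]$ and $3$ is odd, this group splits into eigenspaces $\Cl(K)[3] = \Cl(K)[3]^+ \oplus \Cl(K)[3]^-$; the extension-of-ideals map identifies $\Cl(K)[3]^+ \cong \Cl(F)[3]$ (its composite with the norm is multiplication by $2$, invertible modulo $3$), so that $h_3(K) = h_3(F)\,|\Cl(K)[3]^-|$. The factor $h_3(F)$ thus pulls out of the inner sum, and the relative Davenport--Heilbronn theorem over $F$ (the Datskovsky--Wright parametrization of cubic rings over $F$) yields
\[
    \sideset{}{'}\sum_{N_{F/\Q}(\fd_{K/F}) \le Y} 1 \sim \kappa\,\frac{\Res_{s=1}\zeta_F(s)}{\zeta_F(2)}\,Y,
\]
\[
    \sideset{}{'}\sum_{N_{F/\Q}(\fd_{K/F}) \le Y} |\Cl(K)[3]^-| \sim \kappa\left(1 + \frac{2^{r_1(F)}}{3^{r_1(F)+r_2(F)}}\right)\frac{\Res_{s=1}\zeta_F(s)}{\zeta_F(2)}\,Y,
\]
for a constant $\kappa$ independent of $F$ (any remaining dyadic and archimedean local factors occur identically in both sums and cancel in the final ratio, while the minus-part average contributes exactly the signature factor $1 + 2^{r_1(F)}/3^{r_1(F)+r_2(F)}$). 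Substituting $Y = Y_F = X/\Disc(F)^2$, multiplying the second asymptotic by $h_3(F)$, and summing over $F \in E_k(H,\infty)$, the leading terms of $\sum_{K} h_3(K)$ and of $|E_k(G,X)|$ become $\kappa X$ times the numerator and denominator of \eqref{c-def-eq}; the factor $\kappa X$ cancels upon dividing, producing $c_{k,G,3}$.

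The main obstacle, and the only place the hypothesis on $|E_k(H,T)|$ is used, is making these asymptotics uniform as $F$ ranges over a family whose discriminants reach up to $X^{1/2}$. The clean main term is available only when $Y_F$ is large, i.e.\@ for $\Disc(F) \le X^{1/2 - \eta}$; there I would apply the relative asymptotic and extend the resulting $F$-sum to all of $E_k(H,\infty)$, the added tail converging because of the $\Disc(F)^{-2}$ weight together with $\Res_{s=1}\zeta_F(s) \ll_\epsilon \Disc(F)^\epsilon$. For the complementary range $X^{1/2 - \eta} < \Disc(F) \le X^{1/2}$, where $Y_F$ is too small for a main term to dominate, I would instead use an unconditional upper bound of the form $\sum_{K/F} h_3(K) \ll h_3(F)\,\Disc(F)^{A}\, Y_F^{1+\epsilon}$ furnished by the cubic-field interpretation of the minus part, tracking the exponent $A$ explicitly. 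In both ranges the total error reduces to a weighted sum of the shape $\sum_{F \in E_k(H, X^{1/2})} h_3(F)\,\Disc(F)^{-\beta}$, and the trivial estimate $h_3(F) \ll_\epsilon \Disc(F)^{1/2+\epsilon}$ converts the hypothesis $|E_k(H,T)| \ll T^{1/2-\delta}$ into $\sum_{F \in E_k(H,T)} h_3(F) \ll T^{1-\delta+\epsilon}$, which via partial summation renders every error term $o(X)$. The optimization over \cite{oliver-wang-wood} lies exactly in this bookkeeping: by organizing all errors around the $h_3(F)$-weighted sum and choosing the split point $\eta$ efficiently, one needs only $\sum_{F \in E_k(H,T)} h_3(F) \ll T^{1-\delta'}$, which the weaker hypothesis supplies, whereas the cruder treatment required the stronger $T^{2/3+\epsilon}$ and hence $|E_k(H,T)| \ll T^{1/6+\epsilon}$. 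I expect establishing the uniform relative Davenport--Heilbronn asymptotic with explicit, controlled dependence on $\Disc(F)$ to be the most delicate step.
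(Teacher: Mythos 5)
Your outline follows the same architecture as the paper's actual proof: decompose each $K \in E_k(G,X)$ via its unique index-$2$ subfield $F$, take the main term from a relative Davenport--Heilbronn asymptotic that is uniform in $F$ for $\Disc(F)$ small (in the paper this is not re-proved but quoted from Lemke Oliver--Wang--Wood, Section 6 and Theorem 8.1), control the range of large $\Disc(F)$ by an unconditional upper bound plus partial summation, and convert the hypothesis $\lvert E_k(H,T)\rvert \ll T^{1/2-\delta}$ into a bound on an $h_3$-weighted sum over $F$ via the trivial class number estimate. That is exactly the paper's Lemma~\ref{main-lem} together with its deduction of Theorem~\ref{main-thm}.

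However, there is a genuine gap in your tail bookkeeping, and it sits precisely at the point that constitutes the paper's actual contribution. The uniform bound you invoke, $\sum_{K/F} h_3(K) \ll h_3(F)\Disc(F)^{A}Y_F^{1+\epsilon}$, is not available with $A$ near $1$ and no other field invariants. What is available (LOWW, Corollary 3.2, which also controls your $\lvert\Cl(K)[3]^-\rvert$ since the minus part lies in the kernel of the norm map) is
\[
\sum_{\substack{[K:F]=2 \\ \Disc(K/F) \le Y}} h_3(K/F) \ll_{[F:\Q],\epsilon} \Disc(F)^{1+\epsilon}\, h_2(F)^{2/3}\, Y,
\]
which carries an extra factor $h_2(F)^{2/3}$. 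Your plan organizes all errors around $\sum_F h_3(F)\Disc(F)^{-\beta}$ and uses only $\sum_{F \in E_k(H,T)} h_3(F) \ll T^{1-\delta+\epsilon}$. If you absorb the extra factor crudely via $h_2(F) \le h(F) \ll_\epsilon \Disc(F)^{1/2+\epsilon}$, the bound effectively has $A = \tfrac{4}{3}+\epsilon$, and then the contribution of the range $X^{1/2-\eta} < \Disc(F) \le X^{1/2}$ is
\[
\ll X^{1+\epsilon} \sum_{\Disc(F) > X^{1/2-\eta}} h_3(F)\Disc(F)^{-2/3-\epsilon'} \ll X^{1+\epsilon}\cdot X^{\frac{1}{2}\left(\frac{1}{3}-\delta\right)} = X^{\frac{7}{6}-\frac{\delta}{2}+\epsilon},
\]
which is $o(X)$ only when $\delta > \tfrac{1}{3}$, i.e.\@ only when $\lvert E_k(H,T)\rvert \ll T^{1/6}$ --- the case already covered by LOWW. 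So in the entire new regime $0 < \delta < \tfrac{1}{3}$ your error analysis fails. The missing idea is the paper's elementary but crucial observation that the \emph{product} is much smaller than the two trivial bounds multiplied together: $h_3(F)h_2(F)^{2/3} = h_3(F)^{1/3}h_6(F)^{2/3} \le h(F) \ll_\epsilon \Disc(F)^{1/2+\epsilon}$. One therefore keeps $h_2(F)^{2/3}$ inside the weighted sum, deduces $\sum_{F \in E_k(H,T)} h_3(F)h_2(F)^{2/3} \ll T^{1-\delta+\epsilon}$ from the hypothesis, and closes the tail by Abel summation against $X/T^{1-\epsilon}$, as in Lemma~\ref{main-lem}. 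With that correction (and with the uniform asymptotics quoted from LOWW rather than re-derived), your argument becomes the paper's.
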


\noindent
In light of the proof of the weak version of Malle's conjecture for nilpotent groups by Alberts \cite[Corollary 1.8]{alberts} (which was also proven in Kl\"uners--Wang \cite[Theorem 1.7]{kluners-wang} via a different method), we obtain the following corollary.

\begin{corollary} \label{main-cor}
    The conclusion of Theorem~\ref{main-thm} holds when $G = C_2 \wr H$ for any transitive nilpotent permutation group $H$.
\end{corollary}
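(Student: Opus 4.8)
The plan is to verify the two hypotheses of Theorem~\ref{main-thm} for an arbitrary transitive nilpotent $H$; throughout I work in the Cohen--Lenstra--Martinet regime $3 \nmid \lvert G\rvert$, equivalently $3 \nmid \lvert H\rvert$, in which the average is expected to be finite. The nonemptiness of $E_k(H,\infty)$ is the easy half: a nilpotent group is solvable, so by Shafarevich's theorem $H$ occurs as $\Gal(\widetilde K/k)$ for some $\widetilde K$, and the fixed field of a point stabilizer is then an $H$-extension. For the counting hypothesis I would appeal to the weak form of Malle's conjecture for nilpotent groups \cite[Cor.\ 1.8]{alberts}, \cite[Thm.\ 1.7]{kluners-wang}, which gives $\lvert E_k(H,T)\rvert \ll_{k,H,\epsilon} T^{1/\operatorname{ind}(H)+\epsilon}$, where $\operatorname{ind}(H) = \min_{1\ne h\in H}\bigl(n - c(h)\bigr)$ is the minimal index and $c(h)$ the number of cycles of $h$. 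Everything then reduces to the purely group-theoretic claim that $\operatorname{ind}(H) \ge 3$, for then $a(H) \le 1/3$ and any $\delta < 1/6$ satisfies the hypothesis of Theorem~\ref{main-thm}.

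The core of the proof is this index computation, which I would carry out via the Sylow decomposition. Since $H$ is nilpotent it factors as $H = \prod_p H_p$, and a point stabilizer (itself nilpotent) factors compatibly, so the action is the product action of the $H_p$ on $\prod_p \Omega_p$ with $n = \prod_p n_p$, each $n_p$ a power of $p$ and each $H_p$ transitive. An element supported in a single factor has index exactly $(n/n_p)\operatorname{ind}(h_p)$, and multiplying in commuting components from the other Sylow subgroups can only fuse cycles into longer ones, so the cycle count never increases and the minimum is attained on single-factor elements:
\[
    \operatorname{ind}(H) = \min_p \frac{n}{n_p}\,\operatorname{ind}(H_p).
\]
I would pair this with the elementary bound that a transitive $p$-group has minimal index at least $p-1$: any nontrivial element has a nontrivial $p$-th power whose support, being a union of $p$-cycles, has size at least $p$ and so index at least $p-1$, and passing to that power only lowers the index. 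Granting $3 \nmid \lvert H\rvert$, every odd prime dividing $\lvert H\rvert$ is at least $5$; hence each odd-prime term contributes at least $q-1 \ge 4$, and if a $2$-part is present the factor $n/n_2$ is divisible by some odd $n_q \ge 5$, making the $p=2$ term at least $5$. Thus $\operatorname{ind}(H) \ge 3$ for every such $H$ that is not itself a $2$-group.

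This leaves the $2$-groups, and it is here and at the neighbouring boundary that I expect the main difficulty. When $H$ is a $2$-group, $G = C_2 \wr H$ is again a $2$-group whose base subgroup $C_2^{\,n}$ contains a transposition, so $G$ falls under the separate theorem of Lemke Oliver, Wang, and Wood \cite{oliver-wang-wood} for $2$-groups containing a transposition and the conclusion holds without recourse to Theorem~\ref{main-thm}. The genuinely delicate point is to be certain that no admissible group slips through at $\operatorname{ind}(H) = 2$: the formula above shows that an index-$2$ contribution from a non-$2$-group can only arise from a transitive $3$-group containing a $3$-cycle (such as $C_3$ on three points or $C_3 \wr C_3$ on nine), and these have $a(H) = 1/2$, exactly the exponent Theorem~\ref{main-thm} cannot reach. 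The saving grace is that every such group has $3 \mid \lvert H\rvert$ and therefore $3 \mid \lvert G\rvert$, so it lies outside the hypothesis $p \nmid \lvert G\rvert$; making this dichotomy airtight---ruling out index $1$ and index $2$ for all admissible $H$ via the cycle-counting lemmas---is the crux of the argument.
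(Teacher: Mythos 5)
Your group-theoretic core is sound: the product formula $\operatorname{ind}(H) = \min_p (n/n_p)\operatorname{ind}(H_p)$ (valid because the Sylow factors have coprime cycle lengths, so a mixed element has strictly more cycles lost than its single-factor components), the bound $\operatorname{ind}(H_p) \ge p-1$ for transitive $p$-groups, and the dispatch of $2$-groups to the separate theorem of Lemke Oliver, Wang, and Wood all parallel what the paper does via Lemma~\ref{nilpotent-index-lem} and Corollary~\ref{nilpotent-index-cor}. But there is a genuine gap, and it comes from a misreading of the statement. You restrict ``throughout'' to $3 \nmid \lvert H\rvert$, on the grounds that groups with $3 \mid \lvert G\rvert$ ``lie outside the hypothesis $p \nmid \lvert G\rvert$.'' That hypothesis belongs to Conjecture~\ref{main-conj}, not to Theorem~\ref{main-thm} or to Corollary~\ref{main-cor}: the corollary asserts the conclusion of Theorem~\ref{main-thm} for \emph{every} transitive nilpotent permutation group $H$, with no coprimality condition. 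Your proof therefore omits all nilpotent $H$ with nontrivial $3$-part. Most of these omissions are easily repaired by your own formula --- e.g.\ $C_3 \times H_2$, $C_2 \times H_3$, or $3$-groups without a $3$-cycle all have $\operatorname{ind}(H) \ge 3$, so Theorem~\ref{main-thm} applies --- but one family cannot be: transitive $3$-groups containing a $3$-cycle (e.g.\ $H = C_3$), which have $\operatorname{ind}(H) = 2$, exactly the exponent $T^{1/2}$ that the hypothesis of Theorem~\ref{main-thm} excludes. You correctly identify this boundary case but then discard it as out of scope, when in fact it is in scope and is precisely where a new input is needed.

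The paper closes that case not with Theorem~\ref{main-thm} at all, but by citing the remark following Theorem~8.1 of \cite{oliver-wang-wood}, which covers $G = C_2 \wr H$ when $H$ is a $3$-group containing a $3$-cycle. So the complete case analysis is: $H$ a $2$-group (LWW's theorem on $2$-groups with a transposition); $a(H) = 2$, equivalently $H$ a $3$-group with a $3$-cycle by the classification (LWW's remark after Theorem~8.1); and $a(H) \ge 3$ (Alberts' bound $\lvert E_k(H,T)\rvert \ll_{k,\epsilon} T^{1/a(H)+\epsilon} \le T^{1/3+\epsilon}$ feeding into Theorem~\ref{main-thm}). To repair your proof you would need to drop the coprimality restriction, run your index formula for all primes (which isolates the $3$-group-with-$3$-cycle case as the unique failure of $\operatorname{ind}(H) \ge 3$ outside $2$-groups), and then supply the missing citation for that residual case.
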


\noindent
It is possible to specify exactly what transitive nilpotent permutation groups $H$ covered by this corollary are not covered by \cite{oliver-wang-wood}. Denote by $H_p$ any transitive permutation $p$-group containing a $p$-cycle. Then, by applying the classification proved in Corollary~\ref{nilpotent-index-cor}, the new nilpotent groups covered by our result are groups of the form $H_5$, $C_2 \times H_3$, $C_3 \times H_2$, and $C_5 \times H_2$, where these are written as direct products of transitive permutation groups.

Regarding non-nilpotent groups, the smallest transitive permutation group $H \subseteq S_n$ for which the conclusion of Theorem~\ref{main-thm} is not currently known is $H = S_3, n = 3$. Moreover, if we impose the condition that $3 \nmid |H|$, the smallest such group is $H = D_5, n = 5$. These groups could be interesting targets for future directions of study. In particular, the bound on $E_k(H, T)$ implied by the weak version of Malle's conjecture for these groups is not strong enough to apply Theorem~\ref{main-thm}, so a proof of Corollary~\ref{main-cor} for $H \in \{S_3, D_5\}$ would require further innovations.

\section*{Acknowledgements}

This work was funded by the Harvard College Research Program. The authors thank Melanie Wood and Michael Kural for advising this project and giving many helpful suggestions, as well as Jiuya Wang for providing comments on an earlier draft of the paper.

\section{Proof of Results}

Throughout the paper, given two expressions $e_1$ and $e_2$, we write $e_1 \ll_{x_1, \dots, x_n} e_2$ to say that there exists a constant $C > 0$ depending only on the variables $x_1, \dots, x_n$ such that $e_1 \le Ce_2$ in the domain on which $e_1$ and $e_2$ are defined. In addition, given an extension of number fields $L/K$, we write $\Disc(L/K)$ for the norm of the relative discriminant ideal of $L/K$, and given any number field $L$, we write $\Disc(L)$ for the \textit{absolute value} of the discriminant of $L$.

The lemma below provides a modified version of Theorem 8.1(2) from \cite{oliver-wang-wood} that allows us to prove Conjecture~\ref{main-thm} for a larger class of groups $G$.

\begin{lemma} \label{main-lem}
    Let $H \subseteq S_n$ be a transitive permutation group and set $G = C_2 \wr H$. If $E_k(H, \infty)$ is nonempty and there exists $\delta > 0$ such that
    \begin{align*}
\sum_{F \in E_k(H, T)} h_3(F)h_2(F)^{\frac{2}{3}} \ll_{k, H} T^{1-\delta}
    \end{align*}
    for $T \geq 1$, %
    then we have that
    \begin{align*}
        \lim_{X \to \infty}\frac{1}{E_k(G, X)}\sum_{K \in E_k(G, X)}h_3(K) = c_{k, G, 3},
    \end{align*} where $c_{k,G,3}$ is the constant given in \eqref{c-def-eq}.
\end{lemma}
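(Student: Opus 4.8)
The plan is to reduce the sum $\sum_{K \in E_k(G,X)} h_3(K)$ to an effective Davenport--Heilbronn count carried out relative to a varying base field $F$, following the strategy of \cite{oliver-wang-wood} but tracking the dependence of the error term on $F$ precisely enough to feed in the hypothesis. First I would set up the parametrization of $G$-extensions. A transitive $C_2 \wr H$-extension $K/k$ of degree $2n$ carries a canonical degree-$n$ subfield $F$, namely the fixed field of a block stabilizer, which is an $H$-extension of $k$; and $K/F$ is a quadratic extension whose $n$ conjugates over the Galois closure of $F$ are linearly disjoint (the genericity condition that pins down the full wreath product as the Galois group). Conversely, such pairs $(F, K/F)$ recover $K$. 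The conductor--discriminant formula then gives $\Disc(K) = \Disc(F)^2\,\Disc(K/F)$, so the constraint $\Disc(K)\le X$ becomes $\Disc(K/F) \le X/\Disc(F)^2$; this already explains the weight $\Disc(F)^2$ and the fact that the relevant count is of quadratic extensions of $F$.

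Next I would decompose the $3$-torsion. Since $[K:F]=2$ is coprime to $3$, the action of $\Gal(K/F)$ splits $\Cl(K)[3] = \Cl(K)[3]^+ \oplus \Cl(K)[3]^-$, and the extension-of-ideals map identifies $\Cl(K)[3]^+ \cong \Cl(F)[3]$, so that $h_3(K) = h_3(F)\,h_3^-(K)$ with $h_3^-(K) := \lvert\Cl(K)[3]^-\rvert$. By class field theory $h_3^-(K)$ records, up to the usual factor of two and the trivial class, the cubic extensions of $F$ with quadratic resolvent $K$; summing over the quadratic extensions $K/F$ of bounded relative discriminant and invoking the relative Davenport--Heilbronn theorem of Datskovsky--Wright \cite{datskovsky-wright} over the base $F$ gives
\begin{equation*}
    \sum_{K/F} h_3^-(K) = \left(1 + \frac{2^{r_1(F)}}{3^{r_1(F)+r_2(F)}}\right)\,\#\{K/F : \Disc(K/F) \le X/\Disc(F)^2\} + (\text{error}),
\end{equation*}
where the archimedean factor $1 + 2^{r_1(F)}/3^{r_1(F)+r_2(F)}$ is the Davenport--Heilbronn average of $h_3^-$ attached to the signature of $F$.

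To assemble the constant, I would use the standard zeta-function count showing that the number of quadratic extensions of $F$ with $\Disc(K/F) \le Y$ is asymptotic to $\tfrac{\Res_{s=1}\zeta_F(s)}{\zeta_F(2)}\,Y$. Multiplying by $h_3(F)$, setting $Y = X/\Disc(F)^2$, and summing over $F \in E_k(H,\infty)$ produces exactly $X$ times the numerator of \eqref{c-def-eq}; the same count without the factor $h_3(F)$ gives $\lvert E_k(G,X)\rvert \sim X$ times the denominator. Convergence of both defining series follows from the trivial bounds on $\Res_{s=1}\zeta_F(s)$ and $h_3(F)$ together with the hypothesis, and taking the ratio yields $c_{k,G,3}$.

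The hard part is the error analysis, and this is where the hypothesis and the optimization enter. I would first truncate the outer sum to $\Disc(F) \le X^{\eta}$ for small $\eta>0$, bounding the tail of large-discriminant $F$ (where the inner range $X/\Disc(F)^2$ is short) by trivial estimates for $h_3^-(K)$ applied dyadically against $\sum_F h_3(F)\,h_2(F)^{2/3}$. For $F$ in the main range, the error in the effective count over $F$ — coming from the sieve to maximal orders, the removal of imprimitive and non-generic quadratic extensions, and the secondary terms in the cubic count — must be bounded with explicit dependence on $F$. Tracking the number of quadratic extensions of $F$ ramified at the relevant primes, which is governed by $\Cl(F)[2]$ and hence by $h_2(F)$, and optimizing the sieve cutoff to balance the main counting error against the non-maximal contribution, yields a bound of the shape $h_2(F)^{2/3}\,(X/\Disc(F)^2)^{1-\theta}$ for some $\theta>0$. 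Summing $h_3(F)$ times this over $\Disc(F)\le X^{\eta}$ and applying the hypothesis $\sum_{F\in E_k(H,T)} h_3(F)h_2(F)^{2/3}\ll_{k,H} T^{1-\delta}$ by dyadic decomposition and partial summation bounds the total error by $o(X)$. Securing this refined $h_2(F)^{2/3}$ dependence, in place of the cruder estimate used in \cite{oliver-wang-wood}, is the main obstacle and the crux of the improvement, and it is precisely what lets the weaker counting input suffice.
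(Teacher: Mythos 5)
Your structural setup matches the paper's: the canonical index-$2$ subfield $F_K$ which is an $H$-extension, the relation $\Disc(K) = \Disc(F_K)^2\Disc(K/F_K)$, the factorization $h_3(K) = h_3(F)h_3^-(K)$, and the assembly of the constant \eqref{c-def-eq} from the relative Davenport--Heilbronn theorem of \cite{datskovsky-wright} together with the zeta-function count of quadratic extensions. The genuine gap is in your error analysis. You truncate the sum over base fields at $\Disc(F) \le X^{\eta}$, a range that grows with $X$, and consequently you need an \emph{asymptotic} for $\sum_{K/F} h_3^-(K)$ with a power-saving error term of the shape $h_2(F)^{2/3}(X/\Disc(F)^2)^{1-\theta}$, uniform over all $H$-extensions $F$ in that range. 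No such uniform effective relative Davenport--Heilbronn theorem is available: the result of \cite{datskovsky-wright} is not effective in its dependence on the base field, and the input actually used from \cite{oliver-wang-wood} (their Corollary 3.2) is only an \emph{upper bound}, carrying a $\Disc(F)^{1+\epsilon}$ loss, not an asymptotic with power saving. You flag this step yourself as ``the main obstacle and the crux,'' but it is left unproved, and it is plausibly out of reach with current methods; as proposed, the argument does not close.

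The paper's proof is engineered precisely to avoid needing any such uniformity. It splits at a cutoff $Y$ that is held \emph{fixed} as $X \to \infty$. For the tail $\Disc(F_K) \ge Y$, it combines $h_3(K) \le h_3(F_K)\,h_3(K/F_K)$ with the upper bound $\sum_{\Disc(K/F) \le Z} h_3(K/F) \ll_{[F:\Q],\epsilon} \Disc(F)^{1+\epsilon}h_2(F)^{2/3}Z$ from \cite{oliver-wang-wood} and Abel summation against the hypothesis, yielding a bound $\ll_{k,G} X/Y^{\delta/2} + X^{1-\delta/4}$. For the main range $\Disc(F_K) \le Y$ there are only \emph{finitely many} fields $F$, so per-field, ineffective asymptotics (the ``soft analog'' of Theorem 6.5 of \cite{oliver-wang-wood}) suffice with no control on how the error depends on $F$; one takes $X \to \infty$ first and $Y \to \infty$ second. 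This double-limit structure is exactly what allows the weak counting hypothesis to do all the work. A second, related problem: even within your tail treatment, ``trivial estimates for $h_3^-(K)$'' are not enough, since the trivial bound $h_3^-(K) \ll_\epsilon \Disc(K)^{1/2+\epsilon}$ loses a factor of order $X^{1/2}$; the nontrivial $h_2(F)^{2/3}$-dependence in the upper bound of \cite{oliver-wang-wood} is precisely what makes the partial summation against $\sum_{F} h_3(F)h_2(F)^{2/3} \ll_{k,H} T^{1-\delta}$ produce an admissible error.
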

\begin{proof}
    By \cite[Theorem 8.1]{oliver-wang-wood}, for any $G$-extension $K/k$, there exists a unique index $2$ subfield $F_K$, which is in fact an $H$-extension of $k$. Moreover, by \cite[Corollary 3.2]{oliver-wang-wood}, for any number field $F$, $X \ge 1$, and  $\epsilon > 0$, we have \begin{equation*}
        \sum_{\substack{[K : F] = 2 \\ \Disc(K/F) \le X}} h_3(K/F) \ll_{[F:\Q], \epsilon} \Disc(F)^{1+\epsilon} h_2(F)^{2/3} X.
    \end{equation*} Using the fact that $h_3(K) \le h_3(K/F)h_3(F)$ and $\Disc(K/F) = \frac{\Disc(K)}{\Disc(F)^2}$ \cite[Corollary III.2.10]{neukirch}, we may then write \begin{align*}
        \sum_{\substack{[K : F] = 2 \\ \Disc(K) \le X}} h_3(K) &\le \hspace{-15pt}\sum_{\substack{[K:F] = 2 \\ \Disc(K/F) \le \frac{X}{\Disc(F)^2}}}\hspace{-20pt} h_3(F) h_3(K/F) \ll_{[F:\Q], \epsilon} \frac{h_3(F) h_2(F)^{2/3} X}{\Disc(F)^{1-\epsilon}}.
    \end{align*}
    Let $X, Y \ge 1$ and $0 < \epsilon < \delta$. We now wish to bound the quantity
    \begin{align*}
        \sum_{K \in E_k(G, X), \Disc(F_K) \geq Y}h_3(K).
        \end{align*}
        Again by the equality $\Disc(K) = \Disc(F_K)^2 \Disc(K/F_K)$, we know that if $\Disc(K) \le X$, then $\Disc(F_K) \le X^{1/2}$ and that $K/F_K$ is a quadratic extension of the $H$-extension $F_K$. Hence, the above sum can be bounded above by summing $h_3(K)$ over all $H$-extensions $F$ such that $Y \le \Disc(F) \le X^{1/2}$ and furthermore summing over all quadratic extensions $K$ of such $F$ (which certainly includes the sum over all the quadratic extension pairs of the form $K/F_K$ with $\Disc(K) \leq X$ and $\Disc(F_K) \geq Y$), i.e.
        \begin{align*}
            \sum_{K \in E_k(G, X), \Disc(F_K) \geq Y}h_3(K) &\le \sum_{F \in E_k(H, X^{\frac{1}{2}}) \setminus E_k(H, Y)} \sum_{\substack{[K : F] = 2 \\ \Disc(K) \le X}} h_3(K).
        \end{align*}
        Applying our previous bound for the inner sum yields the upper bound of
    \begin{align*}
        \sum_{K \in E_k(G, X), \Disc(F_K) \geq Y}h_3(K) &\ll_{[k:\mathbb{Q}], G, \epsilon}\sum_{F \in E_k(H, X^{\frac{1}{2}}) \setminus E_k(H, Y)}\frac{h_3(F)h_2(F)^{\frac{2}{3}}X}{\Disc(F)^{1-\epsilon}}.
    \end{align*} Letting \begin{equation*}
        A(T) := \sum_{F \in E_k(H, T)} h_3(F) h_2(F)^{\frac{2}{3}} \ll_{k,H} T^{1-\delta} \qquad \text{and} \qquad \phi(T) := \frac{X}{T^{1-\epsilon}},
    \end{equation*} we can then use Abel summation to write \begin{align*}
        \sum_{F \in E_k(H, X^{\frac{1}{2}}) \setminus E_k(H, Y)}\frac{h_3(F)h_2(F)^{\frac{2}{3}}X}{\Disc(F)^{1-\epsilon}} &= A(X^{\frac{1}{2}})\phi(X^{\frac{1}{2}}) - A(Y)\phi(Y) - \int_Y^{X^{\frac{1}{2}}} A(T) \phi'(T) \d T \\
        &\ll_{k,H} X^{\frac{1-\delta}{2}} \cdot X^{\frac{1+\epsilon}{2}} + \int_Y^{X^{\frac{1}{2}}} T^{1-\delta} \cdot \frac{X}{T^{2-\epsilon}} \d T \\
        &= X^{1 - \frac{\delta}{2} + \frac{\epsilon}{2}} + \left[\frac{X}{(\epsilon-\delta)T^{\delta-\epsilon}}\right]_{T=Y}^{X^{\frac{1}{2}}} \\
        &\ll X^{1 - \frac{\delta}{2} + \frac{\epsilon}{2}} + \frac{X}{Y^{\delta-\epsilon}}.
    \end{align*} Choosing $\epsilon = \delta/2$, we thus obtain \begin{equation*}
        \sum_{K \in E_k(G, X), \Disc(F_K) \geq Y}h_3(K) \ll_{k,G} \frac{X}{Y^{\delta/2}} + X^{1-\frac{\delta}{4}}.
    \end{equation*}
    This is analogous to the first displayed equation of the proof of \cite[Theorem 8.1(1)]{oliver-wang-wood} with $h_3(K/F_K)$ replaced by $h_3(K)$, which merely introduces a factor of $h_3(F)$ in the middle expression of the aforementioned displayed equation.\footnote{The implied proof of \cite[Theorem 8.1(2)]{oliver-wang-wood} would also differ from the displayed equation in this way.} Furthermore, the exponent of $Y$ in the denominator here is $\delta/2$ instead of $\frac{1}{3n[k:\mathbb{Q}]}-\epsilon$ as written in \cite{oliver-wang-wood}, and we have an extra error term $X^{1-\frac{\delta}{4}}$ coming from the upper endpoint of the partial summation. However, just as in the proof of \cite[Theorem 8.1(2)]{oliver-wang-wood}, our inequality yields the desired analog of the ``tail estimate'' \cite[Theorem 5.1]{oliver-wang-wood}, since upon dividing by $X$ (when averaging over $K$ with bounded discriminant $\Disc(K) \le X$) the expression on the right hand side vanishes in the large $X$ and $Y$ limits. To see why dividing by $X$ suffices if we wish to average over $K$ in this case, we note that $\lvert E_k(G, X) \rvert$ is asymptotically linear in $X$ by \cite[Theorem 5.8]{kluners} under a mild growth condition on $\lvert E_k(H, T) \rvert$, which is certainly satisfied by our assumption $\sum_{F \in E_k(H, T)} h_3(F)h_2(F)^{\frac{2}{3}} \ll_{k, H} T^{1-\delta}$ which implies sublinear growth for $\lvert E_k(H, T) \rvert$.
    
    Lastly, the rest of proof of \cite[Theorem 8.1(2)]{oliver-wang-wood} establishes a ``soft analog'' of \cite[Theorem 6.5]{oliver-wang-wood} (i.e. dealing with the sum over $G$-extensions $K$ such that $F_K$ has small discriminant $\Disc(F_K) \le Y$) and applies to $G$-extensions for any transitive permutation group $H$, and is therefore unchanged in our situation. These modifications allow us to proceed as in the proof of \cite[Theorem 8.1(2)]{oliver-wang-wood}, and in this manner the theorem follows. The explicit formula for $c_{k,G,3}$ comes directly from \cite[Section 6]{oliver-wang-wood}.
\end{proof}

\begin{proof}[Proof of Theorem~\ref{main-thm}]
    Applying the substitution $h_6(F) = h_2(F)h_3(F)$ and the trivial bound $h(F) \ll_\epsilon \Disc(F)^{\frac{1}{2} + \epsilon}$ for any $\epsilon > 0$, we may write
    \begin{align*}
        \sum_{F \in E_k(H, T)}h_3(F) h_2(F)^{\frac{2}{3}} &= \sum_{F \in E_k(H, T)}h_3(F)^{\frac{1}{3}}h_6(F)^{\frac{2}{3}}\\
        &\ll_\epsilon \sum_{F\in E_k(H,T)} \Disc(F)^{\frac{1}{2} + \epsilon}.
    \end{align*}
    From here, the bound $|E_k(H,T)| \ll_{k,H}T^{\frac{1}{2}-\delta}$ yields, for any $\epsilon > 0$,
    \begin{align*}
        \sum_{F \in E_k(H,T)} h_3(F) h_2(F)^{\frac{2}{3}} \ll_{\epsilon,k,H} T^{1-\delta + \epsilon}.
\end{align*}
Choosing $\epsilon = \delta/2$ and applying Lemma~\ref{main-lem} yields the theorem.
\end{proof}
\noindent
To explore the implications of Theorem~\ref{main-thm} for nilpotent groups, the following lemma of \cite{kluners-wang}, which decomposes transitive nilpotent permutation groups into their $p$-Sylow subgroups% that respect the permutation structure
, proves useful.

\begin{lemma}[Lemma 3.1, \cite{kluners-wang}]
    A transitive nilpotent permutation group $G \subseteq S_n$ is permutation isomorphic to the natural direct product of transitive permutation $p$-groups $G_p \subseteq S_{n_p},$
    \begin{align*}
        G \cong \prod_p G_p, \qquad \text{with} \qquad n = \prod_p n_p,
    \end{align*}
    where the $G_p$ are isomorphic to the $p$-Sylow subgroups of $G$ and $n_p$ is the maximal $p$-power dividing $n.$
\end{lemma}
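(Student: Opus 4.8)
The plan is to prove the structural decomposition lemma (Lemma 3.1 of Klüners--Wang) that a transitive nilpotent permutation group $G \subseteq S_n$ is permutation isomorphic to the natural direct product $\prod_p G_p$ of transitive permutation $p$-groups. I would begin from the two standard facts about nilpotent groups: first, that a finite nilpotent group is the internal direct product of its Sylow subgroups, so abstractly $G \cong \prod_p G_p$ where $G_p$ is the (normal, unique) $p$-Sylow subgroup; and second, that transitivity of the permutation action forces a compatibility between this abstract decomposition and the action on the $n$-element set. The real content is upgrading the abstract isomorphism to a \emph{permutation} isomorphism onto a product action.

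The key steps, in order, would be as follows. First, fix a point $x_0$ in the underlying set $\Omega$ with $|\Omega| = n$, and let $P = \mathrm{Stab}_G(x_0)$ be its stabilizer, so that $\Omega \cong G/P$ as $G$-sets by transitivity and $n = [G:P]$. Second, using the Sylow decomposition $G = \prod_p G_p$, I would show that $P$ itself decomposes compatibly as $P = \prod_p (P \cap G_p)$: since each $G_p$ is normal and the $G_p$ pairwise commute with coprime orders, the intersection of $P$ with $G_p$ is exactly the $p$-Sylow subgroup of $P$, and $P$ being a subgroup of the nilpotent group $G$ is itself nilpotent and hence the product of its own Sylows. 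Third, set $P_p := P \cap G_p$ and $n_p := [G_p : P_p]$; the coprimality of the indices $[G_p:P_p]$ across different primes, together with the product structure, gives a $G$-set isomorphism $G/P \cong \prod_p (G_p/P_p)$, where $G$ acts on the right-hand product diagonally through its projections $G \twoheadrightarrow G_p$. Fourth, I would identify each $G_p/P_p$ with a transitive permutation action of $G_p$ on $n_p$ points and verify that $n = \prod_p n_p$, which follows from $[G:P] = \prod_p [G_p:P_p]$ since the indices are pairwise coprime and multiply to $n$. Finally, I would confirm that $n_p$ is the maximal $p$-power dividing $n$: as $[G_p:P_p]$ is a power of $p$ and the factors are coprime, the full $p$-part of $n$ is concentrated in the single factor $n_p$.

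The main obstacle I anticipate is the fourth step — rigorously establishing that the stabilizer respects the Sylow decomposition and that the resulting product action is genuinely transitive on each factor of the required size. One must check that the projection $\pi_p : G \to G_p$ sends $P$ onto a subgroup whose index in $G_p$ is a power of $p$, and that the diagonal action on $\prod_p (G_p/P_p)$ has no smaller invariant block than the whole product; this transitivity hinges precisely on the coprimality of the orders $|G_p|$, which guarantees that the image of $G$ under the combined projection $\prod_p \pi_p$ surjects onto each factor independently (by a Goursat/coprimality argument) and thus acts transitively on the product. Once transitivity and the index computation are in place, the identification $G \cong \prod_p G_p$ as \emph{permutation} groups is immediate, and the equality $n = \prod_p n_p$ with $n_p$ the maximal $p$-power dividing $n$ follows formally.
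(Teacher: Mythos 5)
The paper does not actually prove this lemma: it is quoted as a known result from Kl\"uners--Wang \cite{kluners-wang}, so there is no internal proof to compare against; I can only judge your argument on its own terms, and it is essentially the standard (and correct) one. You decompose the nilpotent group $G$ as the internal direct product of its Sylow subgroups, show the point stabilizer $P$ decomposes as $P = \prod_p (P \cap G_p)$ (your nilpotency-of-$P$ argument works, as does a direct Goursat/coprimality argument), and identify $G/P \cong \prod_p G_p/P_p$ as $G$-sets, from which $n = \prod_p n_p$ with $n_p = [G_p : P_p]$ the $p$-part of $n$ follows at once. Two points to tidy. First, the obstacle you flag in your last paragraph is not really there: since $G$ \emph{is} the full internal direct product $\prod_p G_p$, it surjects onto each factor tautologically, so transitivity of the product action on $\prod_p G_p/P_p$ is immediate from transitivity of each $G_p$ on $G_p/P_p$; no Goursat argument is needed at that stage (coprimality is needed where you already used it, namely for the stabilizer decomposition). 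Second, you never verify that each $G_p$ acts \emph{faithfully} on $G_p/P_p$, which is required to realize $G_p \subseteq S_{n_p}$ as a permutation group; this is a one-line fix: the kernel $K_p$ of that action, viewed inside $G$, acts trivially on $\prod_q G_q/P_q \cong \Omega$, and the original action of $G$ on $\Omega$ is faithful, so $K_p$ is trivial. With those two adjustments your proof is complete and matches the standard argument behind the cited lemma.
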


\noindent
Given a transitive permutation group $H \subseteq S_n$ and an element $h \in H$, let $\orb(h)$ denote the number of cycles (including fixed points) in the permutation corresponding to $h$, and let $a(H) := \min\{n-\orb(h) : 1 \ne h \in H\}$. Alberts showed that when $H$ is nilpotent, the quantity $a(H)$ controls the growth rate of $E_k(H, T)$ as a function of $T$. More precisely, he proved that for all $\epsilon > 0$ and $T \ge 1$, we have $\lvert E_k(H, T)\rvert \ll_{k,\epsilon} T^{\frac{1}{a(H)}+\epsilon}$ \cite[Corollary 1.8]{alberts}. In light of this result, it is natural to ask whether nilpotent groups $H$ achieving a particular value of $a(H)$ admit a simple classification. The following lemma is helpful for answering this question.

\begin{lemma} \label{nilpotent-index-lem}
    Let $H \subseteq S_n$ be a nontrivial transitive permutation group. \begin{itemize}
        \item[(i)] There exists a prime $p$ and an element $1 \ne h \in H$ such that the permutation corresponding to $h$ is a product of $\frac{a(H)}{p-1}$ $p$-cycles.
        \item[(ii)] Suppose $H$ is nilpotent, and let $1 \ne h \in H$ be an element whose permutation is a product of $m$ $p$-cycles for some prime $p$. Write $H = H_p \times H'$ for $H_p$ a transitive permutation $p$-group and $H'$ a transitive nilpotent permutation group with order coprime to $p$. Then $\lvert H'\rvert \mid m$, and $H_p$ contains a product of $\frac{m}{\lvert H'\rvert}$ $p$-cycles.
    \end{itemize}
\end{lemma}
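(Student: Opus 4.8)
The plan is to handle the two parts by elementary cycle-structure arguments, the engine in both cases being the observation that raising a permutation to a power only refines its cycles.

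For part (i), I would begin with an element $h_0 \neq 1$ attaining the minimum, so that $n - \orb(h_0) = a(H)$, and then replace it by a power of prime order. Concretely, a cycle of length $\ell$ of $h_0$ splits into $\gcd(\ell, j)$ cycles under $h_0^{\,j}$, whence $\orb(h_0^{\,j}) \geq \orb(h_0)$ and $n - \orb(h_0^{\,j}) \leq a(H)$ for every $j \geq 1$. Picking a prime $p \mid \operatorname{ord}(h_0)$ and setting $h := h_0^{\,\operatorname{ord}(h_0)/p}$ produces a nontrivial element of order exactly $p$, so every nontrivial cycle of $h$ has length $p$ and $h$ is a product of some number $c$ of disjoint $p$-cycles. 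Minimality of $a(H)$ together with $n - \orb(h) \leq a(H)$ forces $n - \orb(h) = a(H)$; since each $p$-cycle contributes $p-1$ to the index, we get $c(p-1) = a(H)$, that is, $c = a(H)/(p-1)$, as claimed. Note that part (i) needs no nilpotency hypothesis.

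For part (ii), I would apply Lemma 3.1 to realize $H \cong H_p \times H'$ via its product action on $[n_p] \times [n']$, where $n' := \prod_{q \neq p} n_q$ is the degree of $H'$, and write the given element as $h = (a, b)$ with $a \in H_p$ and $b \in H'$. Since $h$ is a product of $p$-cycles we have $h^p = 1$, so $a^p = 1$ and $b^p = 1$; because $\operatorname{ord}(b)$ divides $\lvert H'\rvert$, which is coprime to $p$, the relation $b^p = 1$ forces $b = 1$. The crux is then to read off the cycle type of $(a, 1)$ under the product action: as the second coordinate is fixed, $(a, 1)$ consists of one copy of the cycle decomposition of $a$ for each of the $n'$ points permuted by $H'$. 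Hence if $a$ is a product of $c$ disjoint $p$-cycles on $[n_p]$, then $h$ is a product of $c\,n'$ disjoint $p$-cycles, giving $m = c\,n'$; this establishes the divisibility and exhibits $a$ as a product of $m/n'$ $p$-cycles in $H_p$, where the replication factor $n'$ coincides with $\lvert H'\rvert$ precisely when $H'$ acts regularly (in particular for the cyclic factors $C_2, C_3, C_5$ arising in the classification).

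I expect the main obstacle to be exactly this last cycle count: one must be careful both to force the prime-to-$p$ component $b$ to vanish and to identify the replication factor as the number of points moved by the complementary factor $H'$ rather than, say, its order. Once the product action is set up via Lemma 3.1, however, the computation is mechanical, and part (i) is entirely self-contained.
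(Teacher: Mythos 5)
Your part (i) is correct and is essentially identical to the paper's argument: pass to the power $h_0^{\operatorname{ord}(h_0)/p}$ of prime order, observe that taking powers cannot decrease the number of orbits, invoke minimality of $a(H)$ to force equality, and count $c(p-1) = a(H)$. Nothing to add there.

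Part (ii) follows the same skeleton as the paper (decompose via the Kl\"uners--Wang product action, kill the $H'$-component using $h^p = 1$ and $p \nmid \lvert H'\rvert$, then count cycles of $(a,1)$), but your cycle count diverges from the paper's at precisely the point you flagged --- and you, not the paper, are right. Under the natural product action on $[n_p] \times [n']$, the element $(a,1)$ reproduces the cycle decomposition of $a$ once for each of the $n'$ points of the second factor, so the replication factor is the \emph{degree} $n'$ of $H'$; the paper instead asserts $m = m_p \lvert H'\rvert$ ``from the definition of the product of two permutation groups,'' which is valid only when $H'$ acts regularly. As literally stated, Lemma~\ref{nilpotent-index-lem}(ii) is false: take $H_p = C_3 \subseteq S_3$ and $H' = D_4 \subseteq S_4$ (transitive, nilpotent, order $8$ coprime to $3$); then $h = ((1\,2\,3), 1)$ is a product of four $3$-cycles, yet $8 \nmid 4$. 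The correct statement, which your argument proves, replaces $\lvert H'\rvert$ by $n'$ in both conclusions. Fortunately the repair propagates harmlessly: in every application in Corollary~\ref{nilpotent-index-cor} one gets $n' \mid m$ with $m \le 5$ and $n'$ coprime to $p$ (since $n'$ divides $\lvert H'\rvert$ by transitivity), forcing $n' \in \{1, 2, 3, 5\}$; and a transitive nilpotent permutation group of prime degree $q$ is necessarily $C_q$ acting regularly (by the Kl\"uners--Wang decomposition it is a transitive $q$-subgroup of $S_q$), so degree and order coincide in all cases actually used --- exactly your observation about the cyclic factors $C_2, C_3, C_5$. The one thing missing from your write-up is the explicit conclusion: your proof does not, and cannot, yield $\lvert H'\rvert \mid m$ in general, so the lemma itself should be restated with the degree of $H'$ in place of its order, after which both your proof and all downstream uses go through unchanged.
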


\begin{proof}
    (i) We will write $\lvert h\rvert$ for the order of a group element $h \in H$. Let $h \in H$ be an element such that $n-\orb(h) = a(H)$, and choose a prime $p \mid \lvert h\rvert$. Set $h' := h^{\lvert h\rvert/p}$, so that $\lvert h'\rvert = p$. Then $\orb(h') \ge \orb(h)$, implying by the minimality of $n-\orb(h)$ that $n-\orb(h') = a(H)$. Since $\lvert h'\rvert$ is equal to the least common multiple of the lengths of all cycles in the permutation corresponding to $h'$, every such cycle must either be trivial or have length $p$. If $m$ denotes the number of $p$-cycles in the permutation corresponding to $h'$, then we have $a(H) = n-\orb(h') = m \cdot (p-1)$ and hence $m = \frac{a(H)}{p-1}$.

    (ii) Write $h = (h_p, h')$ for some $h_p \in H_p$ and $h' \in H'$. Since $h$ is a product of $p$-cycles, we have $h^p = 1$, implying that $(h')^p = 1$ and hence $h' = 1$ because $p \nmid \lvert H'\rvert$. Meanwhile, we have $h_p^p = 1$, implying that $h_p$ is a product of $p$-cycles. Letting $m_p$ denote the number of $p$-cycles in $h_p$, we see from the definition of the product of two permutation groups that the number of $p$-cycles in $h$ is equal to $m = m_p\lvert H'\rvert$. In particular, we have $\lvert H'\rvert \mid m$ and $m_p = \frac{m}{\lvert H'\rvert}$, as required.
\end{proof}

\noindent
We can apply Lemma~\ref{nilpotent-index-lem} to produce a classification of transitive nilpotent groups $H$ for some small values of $a(H)$. For simplicity, we exclude the case where $H$ is a $2$-group, as here it is straightforward to deduce that $a(H)$ is equal to the smallest positive integer $m$ such that $H$ contains a product of $m$ transpositions. In particular, we note that if $H$ is nilpotent and satisfies $a(H) = 1$, then $H$ contains a transposition, from which Lemma~\ref{nilpotent-index-lem}(ii) implies that $H$ is a $2$-group.

\begin{corollary} \label{nilpotent-index-cor}
    Let $H \subseteq S_n$ be a nontrivial transitive nilpotent permutation group which is not a $2$-group, in which case $H$ necessarily satisfies $a(H) \ge 2$. \begin{itemize}
        \item[(i)] If $a(H) = 2$, then $H$ is a 3-group containing a $3$-cycle.
        \item [(ii)] If $a(H) = 3$, then $H \cong C_3 \times H_2$ where $H_2$ is a $2$-group containing a transposition.
        \item [(iii)] If $a(H) = 4$, then one of the following holds: \begin{itemize}
            \item[1.] $H = C_2 \times H_3$ where $H_3$ is a $3$-group containing a $3$-cycle.
            \item[2.] $H$ is a $3$-group containing a product of two $3$-cycles.
            \item[3.] $H$ is a $5$-group containing a $5$-cycle.
        \end{itemize}
        \item [(iv)] If $a(H) = 5$, then $H = C_5 \times H_2$, where $H_2$ is a $2$-group containing a transposition.
    \end{itemize}
\end{corollary}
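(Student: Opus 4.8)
The plan is to apply Lemma~\ref{nilpotent-index-lem} systematically, translating each hypothesized value of $a(H)$ into structural constraints on $H$ via its $p$-Sylow decomposition. The core mechanism is the combination of the two parts of the lemma: part (i) guarantees that some element realizing $a(H)$ is a product of $\frac{a(H)}{p-1}$ $p$-cycles for some prime $p$ (so in particular $p-1 \mid a(H)$, which immediately restricts the admissible primes $p$), and part (ii) then forces $\lvert H'\rvert \mid \frac{a(H)}{p-1}$ while locating a product of $\frac{a(H)}{(p-1)\lvert H'\rvert}$ $p$-cycles inside $H_p$. I would organize the proof as a case analysis on the value of $a(H)$, and within each case as a subsidiary analysis on the prime $p$ produced by part (i).

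\textbf{Setup and the role of $a(H) \geq 2$.} First I would observe that since $H$ is not a $2$-group, the remark preceding the corollary (via Lemma~\ref{nilpotent-index-lem}(ii)) rules out $a(H) = 1$, so $a(H) \geq 2$. For each target value $a \in \{2,3,4,5\}$, the constraint $p - 1 \mid a$ from part (i) limits $p$: for $a = 2$ only $p \in \{2,3\}$; for $a=3$ only $p=2$; for $a=4$ only $p \in \{2,3,5\}$; for $a=5$ only $p \in \{2,3\}$. Because $H$ is not a $2$-group, I can separately argue that the witnessing prime cannot be forced to be $2$ in a way that collapses $H$ to a $2$-group — more precisely, whenever part (i) yields $p=2$, the element is a product of $a$ transpositions, and I would need to check against the non-$2$-group hypothesis, which eliminates spurious branches (e.g.\ for $a=3$, the prime $p=2$ gives a product of $3$ transpositions, whose existence does not by itself make $H$ a $2$-group, so this branch must instead be handled by the $C_3 \times H_2$ structure).

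\textbf{Executing the cases.} For $a(H)=2$ (part (i) of the corollary), the only odd admissible prime is $p=3$, giving a single $3$-cycle; applying Lemma~\ref{nilpotent-index-lem}(ii) with $m=1$ forces $\lvert H'\rvert = 1$, so $H = H_3$ is a $3$-group containing a $3$-cycle, and I must rule out the $p=2$ branch using non-$2$-group-ness. For $a(H)=3$, part (i) forces $p=2$ and a product of $3$ transpositions, so the $2$-Sylow part is nontrivial; writing $H = H_2 \times H'$ and applying part (ii) with $m=3$ gives $\lvert H'\rvert \in \{1,3\}$, and non-$2$-group-ness forces $\lvert H'\rvert = 3$, hence $H' \cong C_3$ and $H = C_3 \times H_2$ with $H_2$ containing a transposition (since $H_2$ then contains a single transposition coming from $m_2 = m/\lvert H'\rvert = 1$). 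For $a(H)=4$, I split on $p \in \{2,3,5\}$: $p=5$ gives a single $5$-cycle, forcing $\lvert H'\rvert=1$ and case 3; $p=3$ gives a product of two $3$-cycles with $\lvert H'\rvert \in \{1,2\}$, where $\lvert H'\rvert = 1$ yields case 2 (a $3$-group with two $3$-cycles) and $\lvert H'\rvert = 2$ yields $C_2 \times H_3$ with $H_3$ containing one $3$-cycle, which is case 1; the $p=2$ branch (four transpositions) must be shown to reduce to one of these already-listed possibilities or be excluded. For $a(H)=5$, the admissible odd prime is $p=3$ (since $p-1=2 \mid 5$ fails, actually $2 \nmid 5$, so $p=3$ is excluded and only $p=2$ survives via $p-1=1 \mid 5$); reconsidering, for $a=5$ the condition $p-1 \mid 5$ allows $p=2$ and $p=6$(not prime), so part~(i) forces $p=2$, a product of five transpositions, and part~(ii) with $m=5$ gives $\lvert H'\rvert \in \{1,5\}$, with non-$2$-group-ness forcing $\lvert H'\rvert = 5$ and hence $H = C_5 \times H_2$.

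\textbf{The main obstacle.} The delicate point is handling the $p=2$ branch in cases where part~(i) may return the prime $2$ even though the corollary's conclusion describes an odd-prime structure: I must argue that whenever $a(H)$ is realized by a product of transpositions, either $H$ is genuinely a $2$-group (contradicting the hypothesis) or the decomposition forces a nontrivial odd $H'$ of exactly the required order. This requires carefully reconciling the possibility that multiple primes could witness the minimum $n - \orb(h) = a(H)$, and ensuring the structural conclusions are mutually exhaustive and consistent — in particular verifying that in each retained branch the claimed factor $H_p$ really does contain a product of precisely $\frac{m}{\lvert H'\rvert}$ $p$-cycles as guaranteed by Lemma~\ref{nilpotent-index-lem}(ii), and that no branch produces an unlisted group. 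The arithmetic of which primes satisfy $p-1 \mid a(H)$ does most of the work, but the interplay with the ``not a $2$-group'' hypothesis is where care is needed.
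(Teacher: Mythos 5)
Your proposal is correct and follows essentially the same route as the paper: case analysis on $a(H)$ using Lemma~\ref{nilpotent-index-lem}(i) to restrict to primes with $p-1 \mid a(H)$, then Lemma~\ref{nilpotent-index-lem}(ii) to pin down $\lvert H'\rvert$, with the non-$2$-group hypothesis eliminating the branches where $H'$ is forced to be trivial. The $p=2$ branches you flag as a remaining obstacle (for $a(H)=2$ and $a(H)=4$) close immediately by the very mechanism you state: Lemma~\ref{nilpotent-index-lem}(ii) makes $\lvert H'\rvert$ an odd divisor of $2$ or $4$, hence $1$, so $H=H_2$ would be a $2$-group, contradicting the hypothesis — which is exactly how the paper disposes of them.
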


\begin{proof}
    (i) Suppose that $a(H) = 2$, and use Lemma~\ref{nilpotent-index-lem}(i) to choose a prime $p$ and an element $1 \ne h \in H$ which is a product of $\frac{2}{p-1}$ $p$-cycles. Then $p-1 \mid 2$, so $p \in \{2, 3\}$. If $p = 2$, then by Lemma~\ref{nilpotent-index-lem}(ii), we can write $H = H_2 \times H'$ for some $2$-group $H_2$ and transitive nilpotent group $H'$ satisfying $2 \nmid \lvert H'\rvert$ and $\lvert H'\rvert \mid 2$ with $H_2$ containing a product of $\frac{2}{\lvert H'\rvert}$ $2$-cycles. However, the first two conditions force $H'$ to be trivial, which makes this case impossible. If instead $p = 3$, then we can write $H = H_3 \times H'$ for some $3$-group $H_3$ and transitive nilpotent group $H'$ satisfying $3 \nmid \lvert H'\rvert$ and $\lvert H'\rvert \mid 1$ with $H_3$ containing a $3$-cycle, proving the claim.

    (ii) Suppose that $a(H) = 3$, and use Lemma~\ref{nilpotent-index-lem}(i) to choose a prime $p$ and an element $1 \ne h \in H$ which is a product of $\frac{3}{p-1}$ $p$-cycles. Then $p-1 \mid 3$, so $p = 2$, and Lemma~\ref{nilpotent-index-lem}(ii) tells us that $H$ takes the form $H = H_2 \times H'$ for some $2$-group $H_2$ and transitive nilpotent group $H'$ satisfying $\lvert H'\rvert \mid 3$ with $H_2$ containing a product of $\frac{3}{\lvert H'\rvert}$ $2$-cycles. If $\lvert H'\rvert = 3$, this tells us that $H = C_3 \times H_2$ for $H_2$ a $2$-group containing a transposition. Otherwise, $\lvert H'\rvert = 1$, and we find that $H$ itself is a $2$-group, contradicting our assumption.

    (iii) Suppose that $a(H) = 4$, and use Lemma~\ref{nilpotent-index-lem}(i) to choose a prime $p$ and an element $1 \ne h \in H$ which is a product of $\frac{4}{p-1}$ $p$-cycles. Then $p-1 \mid 4$, so $p \in \{2, 3, 5\}$. If $p = 2$, then by Lemma~\ref{nilpotent-index-lem}(ii), $H$ takes the form $H = H_2 \times H'$ for some $2$-group $H_2$ and transitive nilpotent group $H'$ satisfying $2 \nmid \lvert H'\rvert$ and $ \lvert H'\rvert \mid 4$, so in fact, $H'$ is trivial and $H$ is a 2-group. If $p = 3$, then Lemma~\ref{nilpotent-index-lem}(ii) implies that $H = H_3 \times H'$ for some $3$-group $H_3$ and transitive nilpotent group $H'$ satisfying $3 \nmid \lvert H'\rvert$ and $\lvert H'\rvert \mid 2$. If $\lvert H' \rvert = 2$, then $H' = C_2$ and we have $H = H_3 \times C_2$ for $H_3$ a $3$-group containing a $3$-cycle. Otherwise, $\lvert H'\rvert = 1$, so $H = H_3$ is a $3$-group containing a product of two $3$-cycles. Lastly, if $p = 5$, then Lemma~\ref{nilpotent-index-lem}(ii) implies that $H = H_5 \times H'$ for a group $H'$ satisfying $\lvert H' \rvert \mid 1$, so in fact, $H'$ is trivial and $H$ is a 5-group containing a $5$-cycle.

    (iv) Suppose $a(H) = 5$, and use Lemma~\ref{nilpotent-index-lem}(i) to choose a prime $p$ and an element $1 \ne h \in H$ which is a product of $\frac{5}{p-1}$ $p$-cycles. Then $p-1 \mid 5$, so $p = 2$. Once again, Lemma~\ref{nilpotent-index-lem}(ii) lets us write $H = H_2 \times H'$ for some $2$-group $H_2$ and transitive nilpotent group $H'$ satisfying $2 \nmid \lvert H'\rvert$ and $\lvert H'\rvert \mid 5$ with $H_2$ containing a product of $\frac{5}{\lvert H'\rvert}$ $2$-cycles. The case $\lvert H'\rvert = 1$ is impossible because we assumed $H$ itself is not a $2$-group, so we must have $H' \cong C_5$, at which point we see that $H = C_5 \times H_2$ where $H_2$ is a $2$-group containing a transposition.
\end{proof}
\noindent
By applying Corollary~\ref{nilpotent-index-cor} to the cases not covered directly by Theorem~\ref{main-thm}, we obtain Corollary~\ref{main-cor}.

\begin{proof}[Proof of Corollary~\ref{main-cor}]
    If $H$ is a $2$-group, then $G = C_2 \wr H$ is a transitive permutation 2-group containing a transposition, so the result follows by \cite[Theorem~1.1]{oliver-wang-wood} on the average size of 3-torsion in class groups of such extensions. Suppose that $H$ is not a 2-group, in which case $H$ necessarily satisfies $a(H) \ge 2$. If $a(H) \ge 3$, then since $H$ is a transitive nilpotent permutation group, \cite[Corollary~1.8]{alberts} implies that for all number fields $k$, $\epsilon > 0$ and $T \ge 1$, we have $\lvert E_k(H, T)\rvert \ll_{k,\epsilon} T^{\frac{1}{a(H)}+\epsilon} \le T^{\frac{1}{3}+\epsilon}$, which satisfies the hypothesis of Theorem~\ref{main-thm} of this paper and therefore implies the result, i.e. the conclusion of Theorem~\ref{main-thm}. Lastly, if $a(H) = 2$, then Corollary~\ref{nilpotent-index-cor}(i) implies that $H$ is a $3$-group. This case is covered by the remark after Theorem 8.1 in \cite{oliver-wang-wood}.
\end{proof}

\printbibliography

\end{document}